\newtheorem{theorem}{Theorem}[section]
\newtheorem{thmy}{Theorem}
\newtheorem{lemma}[theorem]{Lemma}
\newtheorem{corollary}[theorem]{Corollary}
\newcommand\numberthis{\addtocounter{equation}{1}\tag{\theequation}}
\def\barr{\begin{array}}
\def\earr{\end{array}}
\title{A characterization of $A_5$ by its average order}
\author{Marius T\u arn\u auceanu}
\date{December 15, 2022}
\begin{document}

\maketitle

\begin{abstract}
Let $o(G)$ be the average order of a finite group $G$. M. Herzog, P. Longobardi and M. Maj \cite{7} showed that if $G$ is non-solvable and $o(G)=o(A_5)$,
then $G\cong A_5$. In this note, we prove that the equality $o(G)=o(A_5)$ does not hold for any finite solvable group $G$. Consequently, up to isomorphism, 
$A_5$ is determined by its average order.
\end{abstract}

{\small
\noindent
{\bf MSC2000\,:} Primary 20D60; Secondary 20D10, 20F16.

\noindent
{\bf Key words\,:} average order, sum of element orders, solvable group.}

\section{Introduction}
Given a finite group $G$, we denote by $\psi(G)$ the sum of element orders of $G$ and by $o(G)$ the average order of $G$, that is
\begin{equation}
\psi(G)=\sum\limits_{x\in G}o(x) \mbox{ and } o(G)=\frac{\psi(G)}{|G|}\,.\nonumber
\end{equation}

In the last years there has been a growing interest in studying the pro\-per\-ties of these functions and their relations with the structure of $G$ (see for example \cite{1}-\cite{7}, \cite{9}-\cite{10}, \cite{12} and \cite{14}-\cite{16}).

In \cite{9}, A. Jaikin-Zapirain uses the average order to determine a lower bound for the number of conjugacy classes of a finite $p$-group/nilpotent group. He also suggests the following question: "\textit{Let $G$ be a finite {\rm(}$p$-{\rm)}group and $N$ be a normal {\rm(}abelian{\rm)} subgroup of $G$. Is it true that $o(G)\geq o(N)^{\frac{1}{2}}$}\,\,?\,". Recently, E.I. Khukhro, A. Moret\' o and M. Zarrin proved the following result (see Theorem 1.2 of \cite{10}):

\begin{thmy}
Let $c>0$ be a real number and $p\geq \frac{3}{c}$ be a prime. Then there exists a finite $p$-group $G$ with a normal abelian subgroup $N$ such that $o(G)<o(N)^c$.
\end{thmy}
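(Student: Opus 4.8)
The plan is to refute $o(G)\ge o(N)^c$ by producing, for each admissible pair $(c,p)$, an explicit family of finite $p$-groups in which a large-exponent abelian normal subgroup $N$ has average order vastly exceeding $o(G)^{1/c}$. Taking logarithms, $o(G)<o(N)^c$ is the same as $\log_p o(G)<c\,\log_p o(N)$, so I want to drive the ratio $\log o(G)/\log o(N)$ below $c$. First I would fix $N$ to be a homogeneous abelian $p$-group of exponent $p^{n}$, say $N=(\Z/p^{n})^{r}$; since a proportion $1-p^{-r}$ of its elements have maximal order $p^{n}$, one checks that $o(N)$ lies within a bounded factor of $p^{n}$, whence $\log_p o(N)=n+O(1)$. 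Because every nontrivial $p$-group satisfies $o(G)>p-1$, reaching a small ratio forces $n\to\infty$ while $o(G)$ stays bounded, so the whole game is to build a group around $N$ whose average order does not grow with $n$.

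The mechanism I would use is dilution. Splitting the sum of orders as $\psi(G)=\psi(N)+\sum_{x\notin N}|x|$ and aiming for a group in which (almost) every element outside $N$ has order $p$, I get $\psi(G)\le \psi(N)+p\,(|G|-|N|)$ and hence $o(G)\le \dfrac{o(N)}{[G:N]}+p$. Thus it suffices to produce such a $G$ with $[G:N]\gtrsim o(N)\approx p^{n}$: then $o(G)\le p+1$, and the desired inequality reduces to $p+1<o(N)^{c}$, i.e.\ roughly $cn>1$. Here the hypothesis $p\ge 3/c$ enters, since it gives $c\ge 3/p$ and hence ample room to choose $n$ (and the index $[G:N]$) large enough that $p+1<o(N)^{c}$ holds with margin. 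Up to this point everything is bookkeeping.

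The real obstacle is constructing an extension in which the complement of $N$ consists of elements of order $p$ while $[G:N]$ is as large as $p^{n}$. Writing $G=N\rtimes B$ with $B$ a $p$-group acting on $N$, an element $(v,\beta)$ with $\beta$ of order $p$ has $p$-th power $(\nu_\beta v,1)$, where $\nu_\beta=1+\phi_\beta+\dots+\phi_\beta^{\,p-1}$ is the norm of the automorphism $\phi_\beta$; so I need $\nu_\beta v$ to land in the socle $\Omega_1(N)$ for (almost) all $\beta\neq 1$ and all $v$. This is exactly where naive choices fail: if $N$ is cyclic then $\operatorname{Aut}(N)$ is cyclic, the bulk of $B$ centralises $N$, and the complement acquires many elements of order $p^{n}$; and if $B$ is abelian then a character computation shows that some nontrivial $\beta$ must act with a fixed subspace of full exponent $p^{n}$, so $\nu_\beta\neq 0$ and the dilution is destroyed (the ratio is then stuck near $1$). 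I therefore expect to need $N$ non-cyclic together with a carefully chosen, necessarily non-abelian, action: one wants every relevant $\beta$ to satisfy $(\phi_\beta-1)^{p-1}=p\cdot(\text{unit})$, which forces $\ker(\phi_\beta-1)$ into $\Omega_1(N)$ and hence $\nu_\beta\equiv 0$ on the high-order part. A cyclotomic-type module, on which the acting elements behave like primitive $p$-th roots of unity, is the natural candidate, and engineering a large such action is the crux of the argument.

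Once a group $G$ with these two features — an exponent-$p$ complement and index $[G:N]\ge p^{n}$ — is in hand, verifying $o(G)<o(N)^{c}$ is the short computation above, with $n$ chosen by reference to $p\ge 3/c$. The hardest part, and the one I would spend the most effort on, is the explicit construction and the proof that the $p$-power map really does collapse the complement of $N$ into its socle.
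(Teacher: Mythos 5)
This statement is Theorem 1.2 of Khukhro--Moret\'o--Zarrin \cite{10}; the present paper only quotes it and contains no proof, so your proposal can only be judged on its own terms. On those terms it is not a proof: it is a plan whose decisive step --- the construction of the group --- is explicitly deferred (``the hardest part \dots is the explicit construction''). For an existence theorem, omitting the construction is omitting the proof. The bookkeeping you do carry out (the bound $o(G)\le o(N)/[G:N]+p$ when every element outside $N$ has order $p$, and the identification of the norm $\nu_\beta=1+\phi_\beta+\cdots+\phi_\beta^{p-1}$ as the obstruction) is correct, and the cyclotomic module $N=\mathbb{Z}[\zeta_p]/(p^n)$ with $\langle\zeta_p\rangle$ acting by multiplication is indeed the standard way to make $\nu_\beta$ vanish identically. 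But that gives only $[G:N]=p$, hence $o(G)\ge o(N)/p$, and a short computation shows a dilution factor of $p$ yields $o(G)<o(N)^c$ only for $c>\frac12$ --- it does not even answer Jaikin-Zapirain's original question, let alone prove the statement for arbitrary $c>0$.

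More seriously, the plan as stated --- index $[G:N]\gtrsim p^{n}$ with every element of $G\setminus N$ of order $p$ while $\exp N=p^n$, $n\ge2$ --- is provably impossible, so no amount of effort on the ``crux'' will complete it. If every element of $G\setminus N$ has order $p$, then $G/N$ has exponent $p$ and each coset $xN$ forces $\nu_x\equiv 0$ on $N$, i.e.\ $\mathrm{Norm}_{\langle\phi_x\rangle}$ annihilates $N$. Suppose $|G/N|\ge p^2$, so that $G/N$ contains $Q\cong C_p\times C_p$; since $N$ is abelian, the conjugation action on $N$ genuinely factors through $Q$. In $\mathbb{Z}[Q]$ one has $\sum_{H\le Q,\;|H|=p}\mathrm{Norm}_H=p\cdot 1+\mathrm{Norm}_Q$, and $\mathrm{Norm}_Q=\mathrm{Norm}_{H_0}\mathrm{Norm}_{H_1}$ for distinct $H_0,H_1$; as every $\mathrm{Norm}_H$ annihilates $N$, so does $\mathrm{Norm}_Q$, and the identity then gives $pN=0$, contradicting $\exp N\ge p^2$. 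Hence an exponent-$p$ complement caps the index at $p$. Retreating to ``almost every element outside $N$ has order $p$'' does not rescue the argument as written: the inequality $o(G)\le o(N)/[G:N]+p$ then fails, and the exceptional cosets can contain elements of order up to $p^{n+1}$ whose contribution must be controlled --- which is exactly the part you have not constructed. Finally, your reading of the hypothesis $p\ge 3/c$ as merely giving ``ample room to choose $n$'' cannot be right: if $o(G)\le p+1$ were achievable with $o(N)=p^n$ for arbitrary $n$, the theorem would hold for every $c>0$ and every $p$ with no constraint at all. The presence of the constraint signals that the true construction only drives the ratio $\log o(G)/\log o(N)$ down to roughly $3/p$, not to $0$, and your outline contains no mechanism producing that exponent.
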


Note that Theorem A provides a negative answer to Jaikin-Zapirain's question even if we replace the exponent $\frac{1}{2}$ with any positive real number $c$. In the same paper \cite{10}, the authors posed the following conjecture:

\bigskip\noindent{\bf Conjecture.} {\it Let $G$ be a finite group and suppose that $o(G)<\frac{211}{60}=o(A_5)$. Then $G$ is solvable.}
\bigskip

It has been confirmed by M. Herzog, P. Longobardi and M. Maj \cite{7}.

\begin{thmy}
Let $G$ be a finite group. If $o(G)<\frac{211}{60}$\,, then $G$ is solvable. Moreover, if $G$ is a non-solvable finite group, we have $o(G)=\frac{211}{60}$ if and only if $G\cong A_5$.
\end{thmy}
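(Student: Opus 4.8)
The plan is to recast Theorem B as a single statement: every non-solvable finite group $G$ satisfies $o(G)\geq \frac{211}{60}$, with equality if and only if $G\cong A_5$. This covers both assertions at once, since the first is the contrapositive of ``$o(G)<\frac{211}{60}\Rightarrow G$ solvable'' and the second is the equality case. A direct count in $A_5$ (one element of order $1$, fifteen of order $2$, twenty of order $3$, twenty-four of order $5$) gives $\psi(A_5)=211$ and hence $o(A_5)=\frac{211}{60}$. The basic tool would be a monotonicity lemma: if $1\neq N\trianglelefteq G$ then $o(G)>o(G/N)$. This follows because $\operatorname{ord}(xN)$ divides $\operatorname{ord}(x)$ for every $x\in G$, so summing over the $|N|$ elements of each coset yields $\psi(G)\geq |N|\,\psi(G/N)$; the inequality is strict since any $1\neq n\in N$ has $\operatorname{ord}(n)\geq 2>1=\operatorname{ord}(nN)$, and dividing by $|G|$ gives the claim.

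Next I would argue by induction on $|G|$, analysing a non-solvable group $G$ of minimal order with $o(G)\leq\frac{211}{60}$ and $G\not\cong A_5$. Let $N$ be a minimal normal subgroup. If $G/N$ were non-solvable, the inductive hypothesis would give $o(G/N)\geq\frac{211}{60}$, whence $o(G)>o(G/N)\geq\frac{211}{60}$ by the lemma, a contradiction; so $G/N$ is solvable for every minimal normal $N$. As $G$ is non-solvable this forces $N$ to be non-abelian, and two distinct minimal normal subgroups $N_1,N_2$ would give $N_1\cap N_2=1$, embedding $G$ into the solvable group $G/N_1\times G/N_2$. Hence $N$ is the unique minimal normal subgroup, a direct power $S^k$ of a non-abelian simple group $S$, with trivial centre; since $C_G(N)\trianglelefteq G$ meets $N$ trivially it must be trivial by uniqueness of $N$, so $N\leq G\leq \operatorname{Aut}(N)$ with $G/N$ solvable. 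When $k\geq 2$, peeling off factors via the strict lemma gives $o(S^k)>o(S^{k-1})>\cdots>o(S)\geq\frac{211}{60}$ (the last bound by induction, $S$ being a smaller non-solvable group), again a contradiction; so $k=1$.

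This leaves the almost simple core $S\leq G\leq\operatorname{Aut}(S)$, and the heart of the matter is the two subcases $G=S$ and $S<G$. For $G=S$ one must show that every non-abelian simple group satisfies $o(S)\geq\frac{211}{60}$ with equality exactly for $A_5$; for $S<G$ one must rule out that adjoining outer automorphisms drops the average order to or below $\frac{211}{60}$. I expect this to be the main obstacle, since the naive coset estimate $\psi(G)\geq \psi(S)+2|S|(|G:S|-1)$ is too weak (involutory outer automorphisms contribute too little), and no subgroup-monotonicity is available to push $o(S)\geq\frac{211}{60}$ up to $o(G)$. The natural route is the classification of finite simple groups: $A_5$ is the smallest non-abelian simple group, and using known formulas and estimates for $\psi$ on the simple families (and on their automorphic extensions) one checks the inequality directly for the finitely many small cases and establishes a uniform lower bound well above $\frac{211}{60}$ for all larger simple groups, with the extremal case pinned down by $o(A_5)=\frac{211}{60}$ together with $o(S_5)=\frac{471}{120}>\frac{211}{60}$.
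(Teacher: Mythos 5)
The first thing to say is that the paper does not prove this statement at all: Theorem~B is quoted from Herzog, Longobardi and Maj \cite{7} and used as a black box, while the paper's own contribution is the complementary solvable case (Theorem~1.1). So there is no in-paper proof to compare against; I can only assess your argument on its own terms. Your preliminary reductions are sound and standard for this literature: the computation $\psi(A_5)=211$ is correct, and the quotient-monotonicity lemma $o(G)>o(G/N)$ for $1\neq N\trianglelefteq G$ is correctly proved (order of $xN$ divides order of $x$, strictness from the nontrivial elements of $N$). The minimal-counterexample reduction to a unique non-abelian minimal normal subgroup $N=S^k$ with $C_G(N)=1$ is also fine.

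There are, however, two genuine gaps. First, your elimination of $k\geq 2$ only bounds $o(N)=o(S^k)$ from below (legitimately, since $S^{k-1}$ is a quotient of $S^k$), but $N$ is a \emph{subgroup} of $G$, not a quotient, and as you yourself observe there is no subgroup monotonicity for $o$ (e.g.\ $o(C_3)>o(S_3)$). So for $N<G$ the inequality $o(S^k)>\frac{211}{60}$ gives no contradiction with $o(G)\leq\frac{211}{60}$, and the case $k\geq 2$ is not actually closed; groups such as $(A_5\times A_5)\rtimes C_2$ are untouched by the argument as written. Second, and more seriously, the almost simple case $S\leq G\leq\operatorname{Aut}(S)$ --- which you correctly identify as ``the heart of the matter'' --- is not proved but deferred to ``known formulas and estimates for $\psi$ on the simple families'' via the classification. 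That is where essentially all of the content of Theorem~B lives (the proof in \cite{7} is a long and delicate analysis), so the proposal is a reduction framework plus a promissory note rather than a proof. Checking $o(S_5)=\frac{471}{120}$ handles one extension of one simple group; a uniform lower bound over all non-abelian simple groups and all their almost simple overgroups still has to be established.
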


This constitutes the starting point for the current work. Our main result is as follows.

\begin{theorem}
If $G$ is a finite solvable group, then $o(G)\neq\frac{211}{60}$\,.
\end{theorem}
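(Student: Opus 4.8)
The plan is to assume $o(G)=\frac{211}{60}$ for a finite solvable group $G$ and derive a contradiction. Writing $o(G)=\psi(G)/|G|$ and using that $211$ is prime with $\gcd(211,60)=1$, the equality $60\,\psi(G)=211\,|G|$ forces $|G|=60k$ and $\psi(G)=211k$ for some positive integer $k$; in particular $2,3,5$ all divide $|G|$. First I would pin down the $2$-part. For every finite group $\psi(G)$ is odd: the map $x\mapsto x^{-1}$ fixes exactly the identity among elements of odd order, so the number of odd-order elements is odd, and since elements of even order contribute even summands, $\psi(G)\equiv\#\{x:o(x)\text{ odd}\}\equiv 1\pmod 2$. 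Hence $211k$ is odd, $k$ is odd, and $v_2(|G|)=v_2(60)=2$: the Sylow $2$-subgroup of $G$ has order exactly $4$, so it is $C_4$ or $C_2\times C_2$.

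The crucial structural input is that $A_5$ owes its value $\frac{211}{60}$ to having no element of order $6$, $10$, or $15$ (its element orders are $1,2,3,5$), whereas a solvable group cannot avoid all three. Indeed, since $G$ is solvable and $2,3,5\mid |G|$, the Gruenberg--Kegel theorem (the prime graph of a solvable group has no three pairwise non-adjacent vertices) guarantees that two of $2,3,5$ are joined, i.e. $G$ possesses an element of order $6$, $10$, or $15$. The goal is then to show that the presence of such an expensive element, combined with the fact that the Sylow $2$-subgroup is capped at order $4$, forces $o(G)>\frac{211}{60}$, contradicting the standing assumption.

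To carry this out I would bound $\psi(G)$ from below. The order-$4$ Sylow $2$-subgroup is decisive: unlike the dihedral situation, a Klein four group cannot invert the odd part as thoroughly as a single involution can (two inverting involutions compose to a centralizing one), and a cyclic $C_4$ Sylow instead produces elements of order $4$; either way the cheap involutions are limited while the at least four elements of order $5$ and the mandatory element of order $6$, $10$, or $15$ contribute large summands. I would organize the argument by the prime graph: when $5$ is isolated from $\{2,3\}$ the disconnectedness forces a Frobenius or $2$-Frobenius structure on the $5$-local part, which I would use to evaluate the contribution of the $5$-elements; when $5$ is joined to $2$ or $3$ the resulting element of order $10$ or $15$ is even more expensive. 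In each case I would compare $\psi(G)$ with $\frac{211}{60}|G|$.

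The main obstacle is obtaining this bound \emph{uniformly in $k$}. The dangerous configurations are those with a large normal $3$-part, since elements of order $3$ are cheap (each contributes $3<\frac{211}{60}$) and could in principle drag the average down; what rescues the argument is exactly solvability, which through Gruenberg--Kegel forbids isolating all of $2,3,5$ simultaneously and thereby prevents piling up cheap $3$-elements while keeping the $5$-part Frobenius-detached. Making this quantitative --- most cleanly by showing that $o(G)=\frac{211}{60}$ forces $|G|$ to be bounded, thereby reducing to a finite list of orders (beginning with $|G|=60$, where solvability makes the Sylow $5$-subgroup normal, the action of a complement of order $12$ has nontrivial kernel since $12\nmid 4$, and one obtains an element of order $10$ or $15$ with $\psi(G)>211$) --- is the step I expect to require the most care.
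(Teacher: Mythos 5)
Your opening reduction is correct and matches the paper: from $60\psi(G)=211|G|$ and $\gcd(211,60)=1$ you get $60\mid |G|$, and the parity argument for $\psi(G)$ being odd legitimately forces the Sylow $2$-subgroup to have order exactly $4$, hence $C_4$ or $C_2\times C_2$ --- this is precisely the case split the paper uses. The appeal to the prime graph of a solvable group (no three pairwise non-adjacent primes, so $G$ has an element of order $6$, $10$ or $15$) is a genuinely different structural input from anything in the paper and is a true statement. But everything after that is a plan, not a proof, and the plan as stated cannot work in the form you describe: the existence of \emph{one} element of order $6$, $10$ or $15$ contributes $O(1)$ to $\psi(G)$ and is invisible in the average for large $|G|$, so ``the presence of such an expensive element forces $o(G)>\frac{211}{60}$'' is not a sound inference. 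What is actually needed --- and what you yourself flag as ``the step I expect to require the most care'' --- is a uniform control of the \emph{proportions} of elements of each small order, and no mechanism for obtaining that is supplied.

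For comparison, the paper gets this control as follows: from $o(G)=\frac{211}{60}$ it derives the counting inequality $4n_2+3n_3+2n_4+n_5\geq\frac{149}{60}n-5$ (Lemma 2.1), which says that a large fraction of $G$ must consist of elements of order at most $5$; it then combines this with divisibility constraints on $n_2$ and $n_4$ coming from Sylow theory, with Laffey's bound $n_p\leq\frac{p}{p+1}|G|-1$ and Miller's bound on involutions (Theorem C), and with the structure of the $2$-complement (normal $2$-complement when the Sylow $2$ is $C_4$, Hall $2'$-subgroup and its core when it is $C_2^2$). In the $C_4$ case and in part of the $C_2^2$ case this pins down a Frobenius or abelian structure and reduces to showing a nilpotent non-$p$-group cannot have average order $\frac{61}{15}$ or $\frac{211}{60}$ (Lemma 2.2); in the remaining subcase it only achieves the bound $n\leq 960$ and must fall back on a GAP search over solvable groups of order less than $1000$. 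Your proposal contains no substitute for Lemma 2.1, for the Laffey/Miller input, for the $\frac{61}{15}$ computation, or for the computational verification of small orders, so the theorem is not proved. If you want to pursue the prime-graph route, the honest version of your task is: show that $o(G)=\frac{211}{60}$ bounds $|G|$ absolutely, then check the finitely many orders --- and the bounding step is exactly the hard quantitative work you have deferred.
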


It is now clear that Theorems B and 1.1 lead to the following characterization of $A_5$.

\begin{corollary}
Up to isomorphism, $A_5$ is the unique finite group $G$ such that $o(G)=\frac{211}{60}$\,.
\end{corollary}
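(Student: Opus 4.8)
I would argue by contradiction, so assume $G$ is a finite solvable group with $o(G)=\frac{211}{60}$. Since $211$ is prime and $\gcd(211,60)=1$, the integrality of $\psi(G)=\frac{211}{60}\,|G|$ forces $60\mid |G|$; writing $|G|=60k$ we get $\psi(G)=211k$ and, more precisely, for each prime $p$ the identity $v_p(\psi(G))=v_p(|G|)-v_p(60)$ (where $v_p$ denotes the $p$-adic valuation). In particular $v_2(\psi(G))=v_2(|G|)-2$, $v_3(\psi(G))=v_3(|G|)-1$, and $v_5(\psi(G))=v_5(|G|)-1$. The first input I would extract is a parity argument. Pairing each $x\in G$ with $x^{-1}$, every pair $\{x,x^{-1}\}$ with $x\neq x^{-1}$ contributes $2\,o(x)$ to $\psi(G)$, while the fixed points of $x\mapsto x^{-1}$ are exactly the identity and the involutions, contributing $1+2\cdot(\#\text{involutions})$. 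Hence $\psi(G)$ is always odd. Combined with $v_2(\psi(G))=v_2(|G|)-2$, this forces $v_2(|G|)=2$, so a Sylow $2$-subgroup of $G$ has order exactly $4$ and is either $C_4$ or $C_2\times C_2$.

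With the $2$-part pinned down, the plan is to carry out the analogous analysis at the primes $3$ and $5$, where $v_3(\psi(G))=b-1$ and $v_5(\psi(G))=c-1$ for $|G|=2^2\,3^b\,5^c\,m$ with $\gcd(m,30)=1$. Here I would not expect a clean congruence (the residue $\psi(G)\bmod 3$ is genuinely not constant, as small examples show), so instead I would compute these valuations from the inside using the counting formula $\psi(G)=\sum_{d}d\,\phi(d)\,c_d$, where $c_d$ is the number of cyclic subgroups of order $d$. Cyclic subgroups whose order is divisible by $p$ satisfy $v_p(d\,\phi(d))\geq 1$, so modulo $p$ only the $p'$-part survives; together with Frobenius' theorem (the number of solutions of $x^{n}=1$ is a multiple of $\gcd(n,|G|)$, giving e.g. that the number of elements of order $5$ is $\equiv -1\pmod 5$ and of order $3$ is $\equiv -1\pmod 3$) this should let me control $v_3(\psi(G))$ and $v_5(\psi(G))$ independently and confront them with the forced values $b-1$ and $c-1$. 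I would use solvability crucially at this stage: a minimal normal subgroup of $G$ is elementary abelian, and $G$ possesses Hall $\pi$-subgroups, which I would exploit to isolate the sum of orders of the $5'$-elements (respectively $3'$-elements) and to reduce the extension structure to a manageable form. The aim is to force $b=c=1$ and $m=1$, i.e. $|G|=60$, and then to rule out every solvable group of order $60$ by a direct computation of $\psi$, none of which can equal $211$ (the only group of order $60$ with $\psi=211$ being the non-solvable $A_5$).

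The main obstacle is that $o$ is unbounded on solvable groups — arbitrarily large cyclic groups $C_{4n}$ with $n$ odd already have $v_2(|G|)=2$ and $o\to\infty$ — so there is no a priori bound on $|G|$, and the contradiction cannot come from checking finitely many orders. The argument must therefore be genuinely arithmetic: the delicate point is to compute $v_3(\psi(G))$ and $v_5(\psi(G))$, and to bound or eliminate the coprime-to-$30$ part $m$, tightly enough that they cannot simultaneously match the values dictated by $\psi(G)=211\cdot 3^{\,b-1}5^{\,c-1}m$. The interplay between the three local conditions (at $2$, $3$, $5$) and the possibly non-split structure of the extensions controlling the $5'$- and $3'$-elements is where I expect the real work to lie; the clean parity step fixing the $2$-part is only the entry point.
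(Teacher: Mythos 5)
Your opening step is correct and is a nice way to justify what the paper only asserts: pairing $x$ with $x^{-1}$ shows $\psi(G)=1+2n_2(G)+\sum 2\,o(x)$ is odd, and since $\psi(G)=\frac{211}{60}|G|$ with $\gcd(211,60)=1$, this forces $v_2(|G|)=2$, so the Sylow $2$-subgroup is $C_4$ or $C_2\times C_2$. That matches the entry point of the paper's proof of Theorem 1.1 (the corollary itself is just Theorem 1.1 combined with the quoted Theorem B for the non-solvable case, which you are implicitly using).

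Beyond that, however, there is a genuine gap: the entire body of the argument is a plan rather than a proof, and the plan as stated cannot succeed. The conditions $v_3(\psi(G))=v_3(|G|)-1$ and $v_5(\psi(G))=v_5(|G|)-1$ are congruence conditions; unlike the parity step (where $\psi(G)$ is \emph{always} odd, so $v_2$ is pinned to $0$), nothing forces $v_3(\psi(G))$ or $v_5(\psi(G))$ to be small, so these conditions do not bound $\alpha_3$, $\alpha_5$, or the coprime part $m$, and they cannot force $|G|=60$. You correctly identify this obstacle yourself but offer no mechanism to overcome it. The paper's actual argument is of a completely different nature: from $o(G)=\frac{211}{60}<6$ it extracts the \emph{inequality} $4n_2(G)+3n_3(G)+2n_4(G)+n_5(G)\geq\frac{149}{60}n-5$ (many elements must have order at most $5$), and then plays this lower bound against upper bounds on the $n_d(G)$ obtained from structural information — normal $2$-complements or Hall $2'$-subgroups, Frobenius kernels being nilpotent together with a computation of $o$ on nilpotent non-$p$-groups, and the Laffey and Miller bounds on $n_3$ and $n_2$ — to force $n\leq 960$ in every case; the finitely many remaining orders are eliminated by a GAP search. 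None of this quantitative machinery is present in your proposal, and without something playing its role the argument does not close.
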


For the proof of our results, we need the next theorem.

\begin{thmy}
Let $G$ be a finite group and $n_d(G)$ be the number of elements of order $d$ in $G$, $\forall\, d\in\mathbb{N}$. Then the following statements hold:
\begin{itemize}
\item[{\rm 1)}]{\rm (T.J. Laffey \cite{11})} If $p$ is a prime divisor of $|G|$ and $G$ is not a $p$-group, then
\begin{equation}
n_p(G)\leq\frac{p}{p+1}\,|G|-1.\nonumber
\end{equation}
\item[{\rm 2)}]{\rm (G.A. Miller \cite{13})} If $G$ is non-abelian, then
\begin{equation}
n_2(G)\leq\frac{3}{4}\,|G|-1.\nonumber
\end{equation}
\end{itemize}
\end{thmy}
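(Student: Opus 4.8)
Since the two parts are unrelated classical facts, I would prove them separately. Write $M=\{x\in G:x^{p}=1\}$, the set of elements of order $1$ or $p$, so that $|M|=n_{p}(G)+1$, and $T=\{x\in G:x^{2}=1\}$, so that $|T|=n_{2}(G)+1$. In each case the asserted inequality is equivalent to a lower bound on the complement: part 1) rearranges to $|M|\le p\,(|G|-|M|)$, i.e. at least $\frac{1}{p+1}|G|$ elements have order different from $1$ and $p$, while part 2) rearranges to $|T|\le\frac34|G|$. My plan is to settle part 2) by a short self-contained counting argument and part 1) by reducing to the structure of a Sylow $p$-subgroup together with induction on $|G|$.

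For part 2), suppose for contradiction that $G$ is non-abelian but $|T|>\frac34|G|$. First I would locate a \emph{non-central} involution $a$: if every involution were central then $T\subseteq Z(G)$, whence $|Z(G)|>\frac34|G|$; but $G/Z(G)$ is non-cyclic (otherwise $G$ is abelian), so $[G:Z(G)]\ge4$ and $|Z(G)|\le\frac14|G|$, a contradiction. Fix such an $a$; its centralizer $C=C_{G}(a)$ is proper, so $|C|\le\frac12|G|$. The set $D=\{x\in T:ax\ne xa\}=T\setminus C$ then satisfies $|D|=|T|-|T\cap C|>\frac34|G|-\frac12|G|=\frac14|G|$. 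The key observation is that for involutions $a,x$ one has $(ax)^{2}=1\iff ax=(ax)^{-1}=xa$; hence for every $x\in D$ the product $ax$ lies in $G\setminus T$. Since $x\mapsto ax$ is injective, $|D|\le|G\setminus T|=|G|-|T|<\frac14|G|$, contradicting the previous line. Therefore $|T|\le\frac34|G|$ and $n_{2}(G)\le\frac34|G|-1$.

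For part 1), the base case is a normal Sylow $p$-subgroup $P$: then $P$ is the unique Sylow $p$-subgroup and contains every element of order $p$, so $n_{p}(G)\le|P|-1$; since $G$ is not a $p$-group, $[G:P]\ge2$, whence $|P|\le\frac12|G|\le\frac{p}{p+1}|G|$ (as $\frac12\le\frac{p}{p+1}$), giving the claim. For the general case I would induct on $|G|$ by choosing a minimal normal subgroup $N$ and quotienting. The favourable situation is when $N$ is a $p'$-group and $G/N$ is again not a $p$-group: every element of order $p$ in $G$ maps to one of order $p$ in $G/N$ (it cannot fall into the $p'$-group $N$), and each such image has a fibre, a coset of size $|N|$, containing at most $|N|$ elements of order $p$; hence $n_{p}(G)\le|N|\,n_{p}(G/N)\le|N|\bigl(\tfrac{p}{p+1}|G/N|-1\bigr)=\tfrac{p}{p+1}|G|-|N|\le\tfrac{p}{p+1}|G|-1$ by the inductive hypothesis.

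The main obstacle is precisely the residual configurations where this quotienting breaks down: when $G$ has no useful normal $p'$-subgroup, or when $N$ is a $p'$-group but $G/N$ is a $p$-group (so that $G=N\rtimes P$ is $p$-nilpotent with $p'$-kernel $N$). The sharpness of the bound at the Frobenius groups $C_{q}\rtimes C_{p}$, where every element outside the kernel has order $p$ and $n_{p}(G)=\frac{p}{p+1}|G|-1$ holds with equality, shows that the entire difficulty is concentrated here. In this case the number of order-$p$ elements in a coset $xN$ lying over an order-$p$ element $\bar x\in G/N$ equals $|[N,x]|$, governed by the norm map $n\mapsto n\,n^{x}\cdots n^{x^{p-1}}$ on the coprime $\langle x\rangle$-module $N$ (whose kernel is exactly $[N,x]$). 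Bounding the aggregate of these contributions against $|G|$ is the crux; here I would either carry out a direct fixed-point and coset count in the spirit of Laffey, or analyse the action of $P$ on $N$ jointly with the distribution of the Sylow $p$-subgroups, since a naive per-coset estimate only recovers the too-weak bound $n_{p}(G)\le|G|-|N|$.
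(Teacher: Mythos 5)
First, a point of comparison: the paper does not prove Theorem C at all --- both parts are quoted with citations to Laffey \cite{11} and Miller \cite{13} --- so any self-contained argument is automatically ``a different route.'' Your proof of part 2) is complete and correct, and it stands on its own: if $|T|>\frac{3}{4}|G|$ (where $T=\{x:x^2=1\}$), the non-abelianness of $G$ forces a non-central involution $a$; then every $x\in D=T\setminus C_G(a)$ satisfies $(ax)^2\neq 1$, and injectivity of $x\mapsto ax$ gives $|D|\le |G|-|T|<\frac{1}{4}|G|$, contradicting $|D|>\frac{3}{4}|G|-\frac{1}{2}|G|=\frac{1}{4}|G|$. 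This is a clean involution-counting argument and is a perfectly good substitute for the citation to \cite{13}.

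Part 1) is where the genuine gap lies, and you essentially concede it yourself. Your induction disposes of exactly two configurations: a normal Sylow $p$-subgroup, and a minimal normal $p'$-subgroup $N$ such that $G/N$ is not a $p$-group. Everything else is deferred to ``a direct fixed-point and coset count in the spirit of Laffey'' or an unspecified analysis of the action of $P$ on $N$ --- that is a plan, not a proof. Moreover, the unhandled territory is larger than the $p$-nilpotent ``crux case'' you single out: a group may have no minimal normal subgroup of the required type at all. For a non-abelian simple group (e.g.\ $G=A_5$ with $p=5$) the only minimal normal subgroup is $G$ itself, so the induction never gets started, yet this case is not $p$-nilpotent either; so your claim that ``the entire difficulty is concentrated'' in $G=N\rtimes P$ is not accurate. (A minimal normal elementary abelian $p$-subgroup, by contrast, can be handled by running your fibre count on the full solution set of $x^p=1$ rather than on $n_p$.) A smaller inaccuracy: equality $n_p(G)=\frac{p}{p+1}|G|-1$ for the Frobenius groups $C_q\rtimes C_p$ holds only when $q=p+1$, i.e.\ only for $S_3$; for other $q$ the count $q(p-1)$ is strictly below the bound, so sharpness is a feature of $S_3$, not of the whole family. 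As it stands, part 1) remains unproven; the honest options are to cite \cite{11}, as the paper does, or to reproduce Laffey's actual counting argument, which works directly with the solution set of $x^p=1$ and does not rely on finding convenient normal subgroups.
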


Most of our notation is standard and will usually not be repeated here. Elementary notions and results on groups can be found in \cite{8}.

\section{Proofs of the main results}

We start with the following easy but important lemma.

\begin{lemma}
Let $G$ be a finite group of order $n$ and $n_d(G)$ be the number of elements of order $d$ in $G$, $\forall\, d\in\mathbb{N}^*$. If $o(G)=\frac{211}{60}$\,, then
\begin{equation}
4n_2(G)+3n_3(G)+2n_4(G)+n_5(G)\geq\frac{149}{60}\,n-5.
\end{equation}
\end{lemma}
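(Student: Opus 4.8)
The plan is to express the target quantity directly in terms of $\psi(G)$ by exploiting that the coefficients $4,3,2,1$ attached to $n_2(G),n_3(G),n_4(G),n_5(G)$ are precisely $6-d$ for $d=2,3,4,5$. First I would record the two basic identities $n=\sum_{d\ge 1}n_d(G)$ (this simply counts all elements of $G$) and $\psi(G)=\sum_{d\ge 1}d\,n_d(G)$ (the definition of the sum of element orders), together with the trivial fact $n_1(G)=1$ and the hypothesis $\psi(G)=o(G)\,n=\frac{211}{60}\,n$.

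Next I would form the combination $6n-\psi(G)$ and rewrite it, using the two identities above, as $6n-\psi(G)=\sum_{d\ge 1}(6-d)\,n_d(G)$. The decisive observation is that every summand with $d\ge 6$ is non-positive: the $d=6$ term vanishes and each term with $d\ge 7$ is strictly negative. Discarding these non-positive contributions therefore gives the inequality $6n-\psi(G)\le\sum_{d=1}^{5}(6-d)\,n_d(G)=5n_1(G)+4n_2(G)+3n_3(G)+2n_4(G)+n_5(G)$, where the direction of the inequality is exactly what we want.

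Finally I would substitute $n_1(G)=1$ and $\psi(G)=\frac{211}{60}\,n$, so that the left-hand side becomes $\frac{149}{60}\,n$ (since $6-\frac{211}{60}=\frac{149}{60}$), and then rearrange to isolate the desired expression $4n_2(G)+3n_3(G)+2n_4(G)+n_5(G)\ge\frac{149}{60}\,n-5$. I do not expect any genuine obstacle here; the only points requiring care are the bookkeeping with the coefficients $6-d$ and the verification that truncating the sum at $d=5$ removes only non-positive terms, which is precisely what turns the identity into the claimed inequality.
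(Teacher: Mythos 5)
Your proposal is correct and is essentially the paper's own argument in a slightly different packaging: the paper bounds $\psi(G)=\sum_d d\,n_d(G)$ from below by replacing each $d\ge 6$ with $6$, which is exactly your step of discarding the non-positive terms $(6-d)\,n_d(G)$ for $d\ge 6$ in $6n-\psi(G)=\sum_d(6-d)\,n_d(G)$. The substitution $\psi(G)=\frac{211}{60}\,n$ and the final rearrangement coincide with the paper's, so there is nothing to add.
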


\begin{proof}
Since $n=\sum_{d\in\mathbb{N}^*}n_d(G)$, we deduce that
\begin{align*}
\psi(G)&=1+2n_2(G)+\ldots+6n_6(G)+\ldots\\
&\geq 1+2n_2(G)+\ldots+5n_5(G)+6(n_6(G)+n_7(G)+\ldots)\\
&=1+2n_2(G)+\ldots+5n_5(G)+6(n-1-n_2(G)-\ldots-n_5(G))\\
&=6n-5-(4n_2(G)+3n_3(G)+2n_4(G)+n_5(G)).
\end{align*}Since $o(G)=\frac{211}{60}$\,, it follows that $\psi(G)=\frac{211}{60}\,n$. Therefore we have
\begin{equation}
\frac{211}{60}\,n\geq 6n-5-(4n_2(G)+3n_3(G)+2n_4(G)+n_5(G)),\nonumber
\end{equation}which is equivalent to (1), completing the proof.
\end{proof}

Our second lemma shows that the average order of a finite nilpotent group which is not a $p$-group cannot be certain rational numbers.

\begin{lemma}
Let $G$ be a finite nilpotent group. If $G$ is not a $p$-group, then $o(G)\not\in\left\{\frac{61}{15}\,,\frac{211}{60}\right\}$.
\end{lemma}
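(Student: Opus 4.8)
The plan is to combine the multiplicativity of $o$ on coprime direct factors with a sharp lower bound for the average order of a $p$-group, in order to show that the two forbidden values can be attained only if $|G|$ is divisible by no prime larger than $3$; a $5$-adic valuation argument then closes the gap. Since $G$ is nilpotent but not a $p$-group, it decomposes as $G=P_1\times\cdots\times P_k$ with $k\ge 2$, where the $P_i$ are the nontrivial Sylow subgroups of $G$, attached to distinct primes $p_i$. Because the orders are pairwise coprime, the order of $(x_1,\dots,x_k)$ is the product of the orders of its components, whence $\psi$ is multiplicative and so is $o$, giving $o(G)=\prod_{i=1}^k o(P_i)$. I would record this as the first step.

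The second step is the uniform estimate that every nontrivial $p$-group $P$ satisfies $o(P)\ge\frac{p^2-p+1}{p}$, with equality exactly when $P\cong C_p$. Indeed every non-identity element of $P$ has order a positive power of $p$, hence order at least $p$, so $\psi(P)\ge 1+(|P|-1)p$; dividing by $|P|=p^m$ gives a quantity that is increasing in $m$ and minimized at $m=1$, namely $\frac{p^2-p+1}{p}$. For $p\ge 5$ this forces $o(P)\ge o(C_5)=\frac{21}{5}$, since $\frac{p^2-p+1}{p}=p-1+\frac1p$ is increasing in $p$.

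The third step exploits that $\frac{21}{5}=\frac{63}{15}$ strictly exceeds both $\frac{61}{15}$ and $\frac{211}{60}$. Thus if any prime $p\ge 5$ divided $|G|$, the corresponding factor (all other factors being at least $1$) would already give $o(G)\ge o(P_p)\ge\frac{21}{5}$, larger than either target; hence $|G|=2^a3^b$. But then $o(G)=\psi(G)/(2^a3^b)$ is a rational number whose denominator, in lowest terms, is coprime to $5$, whereas $\frac{61}{15}$ and $\frac{211}{60}$ are in lowest terms with denominators divisible by $5$ and numerators coprime to $5$. Comparing the exponent of $5$ on the two sides of $o(G)=\frac{61}{15}$ (respectively $o(G)=\frac{211}{60}$) then yields the desired contradiction.

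I expect the only delicate point to be the third step at the prime $5$ for the value $\frac{61}{15}$: the margin is thin, since $\frac{21}{5}=\frac{63}{15}$ beats $\frac{61}{15}$ only by $\frac{2}{15}$. The argument therefore genuinely relies on the lower bound being the exact value $o(C_5)=\frac{21}{5}$ rather than a cruder estimate, so the sharpness established in the second step is precisely what makes the plan go through.
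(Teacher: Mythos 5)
Your proof is correct, and its second half takes a genuinely different route from the paper's. The shared part is the reduction: both arguments use multiplicativity of $o$ over the Sylow decomposition together with the lower bound $o(P)\ge p-\frac{p-1}{|P|}\ge\frac{p^2-p+1}{p}$ for a nontrivial $p$-group $P$, and both exploit that at $p=5$ this equals $\frac{21}{5}=\frac{63}{15}$, which (barely) exceeds $\frac{61}{15}$ and hence $\frac{211}{60}$. The paper then organizes the elimination by the number $k$ of prime divisors --- $k\ge3$, then $k=2$ with $(p_1,p_2)\neq(2,3)$ --- and settles the residual case $|G|=2^{n_1}3^{n_2}$ by a further split on $(n_1,n_2)$, terminating in the explicit value $o(C_6)=\frac{7}{2}$. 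You instead note that any prime divisor $p\ge 5$ already forces $o(G)\ge\frac{21}{5}$ (all other factors being greater than $1$), so one may assume $|G|=2^a3^b$, and then you dispatch both target values at once by a $5$-adic valuation count: $o(G)=\psi(G)/2^a3^b$ has nonnegative $5$-adic valuation, whereas $\frac{61}{15}$ and $\frac{211}{60}$ have valuation $-1$. This replaces the paper's final case analysis with a one-line arithmetic obstruction, and it is somewhat more general, since it rules out any candidate value whose reduced denominator is divisible by $5$; the paper's computation, on the other hand, yields the slightly finer information that $C_6$ is the only surviving $\{2,3\}$-group to check. Both proofs hinge on the same sharp constant $\frac{21}{5}$ at the prime $5$, as you correctly flag at the end.
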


\begin{proof}
We write $G$ as a direct product of its Sylow subgroups
\begin{equation}
G=G_1\times\cdots\times G_k,\nonumber
\end{equation}where $k\geq 2$ and $G_i\in{\rm Syl}_{p_i}(G)$, for all $i=1,...,k$. Since the function $o$ is multiplicative, we get
\begin{equation}
o(G)=o(G_1)\cdots o(G_k).\nonumber
\end{equation}Let $|G_i|=p_i^{n_i}$, $i=1,...,k$, and assume that $p_1<...<p_k$. Then for each $i$ we have
\begin{equation}
o(G_i)\geq\frac{1+p_i(p_i^{n_i}-1)}{p_i^{n_i}}=p_i-\frac{p_i-1}{p_i^{n_i}}\geq p_i-\frac{p_i-1}{p_i}>1.\nonumber
\end{equation}For $k\geq 3$, we get
\begin{equation}
o(G)\geq o(G_1)o(G_2)o(G_3)\geq\!\left(2{-}\frac{1}{2}\right)\!\!\left(3{-}\frac{2}{3}\right)\!\!\left(5{-}\frac{4}{5}\right)\!=\frac{147}{10}>\frac{61}{15}>\frac{211}{60}\,.\nonumber
\end{equation}If $k=2$ and $(p_1,p_2)\neq(2,3)$, then
\begin{equation}
o(G)=o(G_1)o(G_2)\geq\min\!\left\{\!\!\left(3{-}\frac{2}{3}\right)\!\!\left(5{-}\frac{4}{5}\right)\!\!,\!\left(2{-}\frac{1}{2}\right)\!\!\left(5{-}\frac{4}{5}\right)\!\!\right\}\!=\frac{63}{10}>\frac{61}{15}\,.\nonumber
\end{equation}Thus, we can suppose that $(p_1,p_2)=(2,3)$. If $(n_1,n_2)\neq(1,1)$, then
\begin{equation}
o(G)=o(G_1)o(G_2)\geq\min\!\left\{\!\!\left(2{-}\frac{1}{4}\right)\!\!\left(3{-}\frac{2}{3}\right)\!\!,\!\left(2{-}\frac{1}{2}\right)\!\!\left(3{-}\frac{2}{9}\right)\!\!\right\}\!=\frac{49}{12}>\frac{61}{15}\,.\nonumber
\end{equation}Finally, for $(n_1,n_2)=(1,1)$ we have $G\cong C_6$ and so
\begin{equation}
o(G)=\frac{7}{2}\not\in\left\{\frac{61}{15}\,,\frac{211}{60}\right\}\!,\nonumber
\end{equation}as desired.
\end{proof}

We are now able to prove our main result. Note that a search with GAP shows that there is no solvable group $G$ of order less than $1000$ such that $o(G)=\frac{211}{60}$\,.

\bigskip\noindent{\bf Proof of Theorem 1.1.}
Assume that there exists a finite solvable group $G$ of order $n$ with $o(G)=\frac{211}{60}$\,. Then $60\psi(G)=211n$, implying that $n=2^23^{\alpha_2}5^{\alpha_3}\cdots$ with $\alpha_2,\alpha_3\geq 1$. Let $H$ be a Sylow $2$-subgroup of $G$. We distinguish the following two cases.

\medskip
\hspace{5mm}\noindent{\bf Case 1.} $H\cong C_4$\vspace{2mm}

Then $G$ has a normal $2$-complement $K$ of order $m=3^{\alpha_2}5^{\alpha_3}\cdots$. First of all, we prove that
\begin{equation}
n_4(G)=2m=\frac{1}{2}\,n\,.
\end{equation}Suppose that $n_4(G)<2m$. Since $\frac{n_4(G)}{2}$ divides $m$, it follows that $\frac{n_4(G)}{2}\leq\frac{m}{3}$ and so $n_4(G)\leq\frac{2}{3}\,m=\frac{1}{6}\,n$. This implies that $n_2(G)\leq\frac{1}{12}\,n$\,. Also, since all elements of odd order of $G$ are contained in $K$, we get $n_3(G),n_5(G)< m=\frac{1}{4}\,n$\,. Thus we have
\begin{equation}
4n_2(G)+3n_3(G)+2n_4(G)+n_5(G)<\left(\frac{1}{3}+\frac{3}{4}+\frac{1}{3}+\frac{1}{4}\right)n=\frac{5}{3}\,n,\nonumber
\end{equation}which together with (1) lead to
\begin{equation}
\frac{149}{60}\,n-5<\frac{5}{3}\,n,\nonumber
\end{equation}that is $n\leq 6$, a contradiction.

Next we prove that
\begin{equation}
n_2(G)=m=\frac{1}{4}\,n\,.
\end{equation}Suppose that $n_2(G)<m$. Since every subgroup of order $2$ of $G$ is contained in at least three Sylow $2$-subgroups, we infer that $n_2(G)\leq\frac{m}{3}=\frac{1}{12}\,n$\,. Similarly, we get
\begin{equation}
4n_2(G)+3n_3(G)+2n_4(G)+n_5(G)<\left(\frac{1}{3}+\frac{3}{4}+1+\frac{1}{4}\right)n=\frac{7}{3}\,n\nonumber
\end{equation}and so
\begin{equation}
\frac{149}{60}\,n-5<\frac{7}{3}\,n\nonumber
\end{equation}by (1). Thus $n\leq 33$, a contradiction.

The equalities (2) and (3) show that any two Sylow $2$-subgroups of $G$ intersect trivially and any element of $G$ of even order is a $2$-element. We deduce that $G$ is a Frobenius group and consequently its kernel $K$ is nilpotent. On the other hand, we have
\begin{equation}
\frac{211}{60}\,n=\psi(G)=\psi(K)+2n_2(G)+4n_4(G)=\psi(K)+\frac{5}{2}\,n,\nonumber
\end{equation}implying that
\begin{equation}
\psi(K)=\frac{61}{60}\,n=\frac{61}{15}\,m,\nonumber
\end{equation}or equivalently
\begin{equation}
o(K)=\frac{61}{15}\,.\nonumber
\end{equation}This contradicts Lemma 2.2.

\medskip
\hspace{5mm}\noindent{\bf Case 2.} $H\cong C_2^2$\vspace{2mm}

Since $G$ is solvable, it has a Hall $2$-subgroup $K$ of order $m=3^{\alpha_2}5^{\alpha_3}\cdots$. We have the next two subcases.

\medskip
\hspace{10mm}\noindent{\bf Subcase 2.1.} $K$ is normal in $G$\vspace{2mm}

Then $K$ contains all elements of odd order of $G$. By using Theorem C, 1), for $p=3$, it follows that $n_3(G)\leq\frac{3}{4}\,m=\frac{3}{16}\,n$. Also, we have $n_3(G)+n_5(G)\leq m-1=\frac{n}{4}-1$. Then
\begin{equation}
3n_3(G)+n_5(G)=2n_3(G)+(n_3(G)+n_5(G))\leq\frac{3}{8}\,n+\frac{1}{4}\,n-1=\frac{5}{8}\,n-1,\nonumber
\end{equation}implying that
\begin{equation}
4n_2(G)+3n_3(G)+2n_4(G)+n_5(G)\leq 4n_2(G)+\frac{5}{8}\,n-1.\nonumber
\end{equation}From (1) we get
\begin{equation}
n_2(G)\geq\frac{223}{480}\,n-1.
\end{equation}

Let $H_i$, $i=1,2,3$, be the subgroups of order $2$ of $H$. Then $G=KH_1\cup KH_2\cup KH_3$ and so $n_2(G)=3n_2(KH_i)$, for any $i=1,2,3$. On the other hand, we have $n_2(KH_i)\mid m$ by Sylow's theorems. If $n_2(KH_i)<m$, then $n_2(KH_i)\leq\frac{m}{3}$\,, which lead to
\begin{equation}
n_2(G)\leq m=\frac{1}{4}\,n.
\end{equation}Now, from (4) and (5), we obtain
\begin{equation}
\frac{223}{480}\,n-1\leq\frac{1}{4}\,n,\nonumber
\end{equation}that is $n\leq 4$, a contradiction. Thus $n_2(KH_i)=m$, for all $i=1,2,3$, and $n_2(G)=3m=\frac{3}{4}\,n$. Then, by Theorem C, 2), it follows that $G$ is abelian. Since $o(G)=\frac{211}{60}$\,, this contradicts Lemma 2.2.

\medskip
\hspace{10mm}\noindent{\bf Subcase 2.2.} $K$ is not normal in $G$\vspace{2mm}

Then $K$ has two or four conjugates in $G$. Let $C={\rm Core}_G(K)$. Since $[G:C]$ divides $4!=24$, we get $[G:C]=12$ and so $[K:C]=3$. We observe that $A_4$ is the unique group of order $12$ containing more than one subgroup of order $3$. Thus $G/C\cong A_4$ and $K$ has four conjugates in $G$, say $K_i$, $i=1,..,4$. Then $G$ possesses
\begin{equation}
\left|\bigcup_{i=1}^4K_i\right|=m+3\left(m-\frac{m}{3}\right)=3m\nonumber
\end{equation}elements of odd order. Also, we have
\begin{align*}
n_3(G)&=n_3\!\!\left(\bigcup_{i=1}^4K_i\!\right)\leq n_3(K_1)+3\left|\bigcup_{i=2}^4(K_i\setminus K_1)\right|\\
&\leq\frac{3m}{4}+3\left(m-\frac{m}{3}\right)=\frac{11}{4}\,m=\frac{11}{16}\,n\numberthis\label{eqn}
\end{align*}by applying Theorem C, 1), for the group $K_1$ and $p=3$.

Assume that $n_2(G)=m$. Then the number $s_2$ of Sylow $2$-subgroups of $G$ is at least $\frac{m}{3}$\,. Since all Sylow $2$-subgroups of $G$ are contained in $CH$ and $|CH|=4\cdot\frac{m}{3}$, we get $s_2\mid\frac{m}{3}$ and therefore $s_2=\frac{m}{3}$\,. This implies again that $G$ is a Frobenius group, a contradiction. Thus
\begin{equation}
n_2(G)\leq\frac{m}{3}=\frac{n}{12}\,.\nonumber
\end{equation}Since $C$ includes all Sylow $5$-subgroups of $G$, it results that
\begin{equation}
n_5(G)\leq|C|-1=\frac{m}{3}-1=\frac{n}{12}-1\,.\nonumber
\end{equation}Then
\begin{equation}
4n_2(G)+3n_3(G)+2n_4(G)+n_5(G)\leq\frac{n}{3}+3n_3(G)+\frac{n}{12}-1=3n_3(G)+\frac{5}{12}\,n-1\nonumber
\end{equation}which together with (1) lead to
\begin{equation}
3n_3(G)\geq\left(\frac{149}{60}-\frac{5}{12}\right)n-4,\nonumber
\end{equation}that is
\begin{equation}
n_3(G)\geq\frac{31}{45}\,n-\frac{4}{3}\,.
\end{equation}Now, from (6) and (7), we obtain
\begin{equation}
\frac{31}{45}\,n-\frac{4}{3}\leq\frac{11}{16}\,n,\nonumber
\end{equation}that is $n\leq 960$, a contradiction.

This completes the proof.$\qed$

\vspace*{3ex}\small

\hfill
\begin{minipage}[t]{5cm}
Marius T\u arn\u auceanu \\
Faculty of  Mathematics \\
``Al.I. Cuza'' University \\
Ia\c si, Romania \\
e-mail: {\tt tarnauc@uaic.ro}
\end{minipage}

\end{document}